\newcommand{\N}{\ensuremath{\mathbb{N}}}
\newcommand{\R}{\ensuremath{\mathbb{R}}}
\newcommand{\C}{\ensuremath{\mathbb{C}}}
\newcommand{\eps}{\ensuremath{\varepsilon}}
\newtheorem{theorem}{Theorem}[section]
\newtheorem{lemma}[theorem]{Lemma}
\newtheorem{proposition}[theorem]{Proposition}
\newtheorem{corollary}[theorem]{Corollary}
\theoremstyle{definition}
\newtheorem{definition}[theorem]{Definition}
\theoremstyle{remark}
\newtheorem{remark}[theorem]{Remark}
\numberwithin{equation}{section}
\begin{document}
\setcounter{page}{1}

\color{darkgray}{
\noindent 

\centerline{}

\centerline{}

\title[Large disks touching three sides of a quadrilateral]{Large disks touching three sides of a quadrilateral}

\author[Alex Rodriguez]{Alex Rodriguez}
\address{Department of Mathematics, Stony Brook University, New York, USA.\\
	\textsc{\newline \indent 
	   \href{https://orcid.org/0000-0001-9097-4025%
	     }{\includegraphics[width=1em,height=1em]{orcid2} {\normalfont https://orcid.org/0000-0001-9097-4025}}
	       }}}
\email{\textcolor[rgb]{0.00,0.00,0.84}{alex.rodriguez@stonybrook.edu}}

\subjclass[2020]{Primary 30C62; Secondary 30C75.}

\keywords{Complex analysis, Quasiconformal mappings in the plane}

\date{December 2, 2024
\newline \indent $^{*}$ Alex Rodriguez. The author is partially supported by NSF grant DMS 2303987 and the Simons Foundation}

\begin{abstract}
We show that every Jordan quadrilateral $Q\subset\C$ contains a disk $D$ so that $\partial D\cap\partial Q$ contains points of three different sides of $Q$. As a consequence, together with some modulus estimates from Lehto and Virtanen, we offer a short proof of the main result obtained by Chrontsios-Garitsis and Hinkkanen in 2024 and it also improves the bounds on their result.
\end{abstract} 

\maketitle

\section{Introduction}

By a Jordan quadrilateral $Q=Q(v_{1},v_{2},v_{3},v_{4})\subset\C$, we mean a bounded Jordan domain with four marked points, which we call \textbf{quad-vertices} (and we suppose that they are oriented counter-clockwise). To abbreviate we will refer to them just as quadrilaterals. We denote the Jordan arcs joining these points by $a_{1}, b_{1}, a_{2}$ and $b_{2}$ as it is indicated in Figure \ref{figure:modulusQuad}, which we call the \textbf{sides} of the quadrilateral. We refer to each pair of non-intersecting sides as opposite sides, and we call $a_{1}, a_{2}$ the $a$-sides of $Q$ and $b_{1},b_{2}$ the $b$-sides of $Q$.

\begin{figure}[h]
	\includegraphics[scale=0.65]{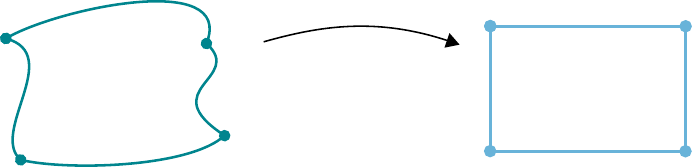}
	\setlength{\unitlength}{\textwidth}
	\put(-0.25,0.115){\scriptsize $\varphi$}
	\put(-0.09,0.055){\scriptsize $R$}
	\put(-0.17,-0.012){\scriptsize $0$}
	\put(0.005,-0.012){\scriptsize $M$}
	\put(-0.17,0.11){\scriptsize $i$}
	\put(0.01,0.11){\scriptsize $M+i$}
	\put(-0.45,0.06){\scriptsize $Q$}
	\put(-0.35,0.05){\scriptsize $b_{1}$}
	\put(-0.55,0.05){\scriptsize $b_{2}$}
	\put(-0.45,0.13){\scriptsize $a_{2}$}
	\put(-0.45,-0.02){\scriptsize $a_{1}$}
	\put(-0.53,-0.01){\scriptsize $v_{1}$}
	\put(-0.35,0.01){\scriptsize $v_{2}$}
	\put(-0.36,0.09){\scriptsize $v_{3}$}
	\put(-0.545,0.095){\scriptsize $v_{4}$}
	\centering
	\caption{Representation of the quad-vertices, $a$-sides and $b$-sides of a quadrilateral Q, together with its representation via conformal mapping to a rectangle.}
	\label{figure:modulusQuad}
\end{figure}

In this paper we prove:

\begin{theorem}\label{mainTheorem}
For any Jordan quadrilateral $\Omega$, there exists a disk $D\subset\Omega$ so that $\partial D\cap\partial Q$ contains points of three sides of $Q$. In particular, it contains points from opposite sides.
\end{theorem}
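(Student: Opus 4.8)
The plan is to translate the geometric requirement into a statement about the four distance functions to the sides, and then to locate a point equidistant to three of them by a connectedness argument.

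First I would set, for $z\in\overline{Q}$, the functions $\alpha_i(z)=\operatorname{dist}(z,a_i)$ and $\beta_j(z)=\operatorname{dist}(z,b_j)$ for $i,j\in\{1,2\}$, together with $\alpha=\min(\alpha_1,\alpha_2)$, $\beta=\min(\beta_1,\beta_2)$ and $r(z)=\operatorname{dist}(z,\partial Q)=\min(\alpha,\beta)$. All of these are Lipschitz. The open disk $D(z,r(z))$ lies in $Q$, and $\partial D\cap\partial Q$ is exactly the nearest-point set, which touches $a_i$ (resp.\ $b_j$) precisely when $\alpha_i(z)=r(z)$ (resp.\ $\beta_j(z)=r(z)$). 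Thus it suffices to produce a point $z_\ast\in Q$ at which at least three of $\alpha_1,\alpha_2,\beta_1,\beta_2$ attain the common value $r(z_\ast)$; since $a_1,a_2$ are disjoint and so are $b_1,b_2$, such a triple must consist of both $a$-sides and one $b$-side, or both $b$-sides and one $a$-side, which also forces the opposite-sides conclusion.

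The engine is the separating set $Z=\{z\in\overline{Q}:\alpha(z)=\beta(z)\}$ together with the two ``discriminant'' functions $\psi=\alpha_1-\alpha_2$ and $\chi=\beta_1-\beta_2$. I would first record that a point $z_\ast\in Z$ with $\psi(z_\ast)=0$ satisfies $\alpha_1(z_\ast)=\alpha_2(z_\ast)=\alpha(z_\ast)=\beta(z_\ast)=r(z_\ast)$, hence its disk touches both $a$-sides and whichever $b$-side realises $\beta$: exactly a three-sided disk (and symmetrically for a zero of $\chi$ on $Z$). Here $r(z_\ast)>0$ automatically, since $\alpha_1=\alpha_2=0$ is impossible on the disjoint arcs $a_1,a_2$, so the disk is genuine and $z_\ast$ is interior. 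It therefore remains to find a zero of $\psi$ or of $\chi$ on $Z$. On $\partial Q$ one has $\alpha=0<\beta$ on the open $a$-arcs and $\beta=0<\alpha$ on the open $b$-arcs, so $g:=\alpha-\beta$ is strictly negative on the interiors of $a_1,a_2$ and strictly positive on those of $b_1,b_2$. Consequently every vertex $v_k$ lies on $Z$, and inspecting which side is nearest shows that the sign pair $(\operatorname{sgn}\psi,\operatorname{sgn}\chi)$ takes the four \emph{distinct} values $(-,+),(-,-),(+,-),(+,+)$ at $v_1,v_2,v_3,v_4$ respectively.

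The argument then closes by contradiction. If no three-sided disk existed, then $\psi$ and $\chi$ would be nowhere zero on $Z$, hence of constant sign on each connected component of $Z$; since the four vertices carry four distinct sign pairs, they would have to lie in four distinct components of $Z$. The heart of the matter, and the step I expect to be the main obstacle, is to rule this out: because $Z$ is the common frontier of the disjoint open sets $\{g<0\}$ and $\{g>0\}$, it separates the two $a$-arcs from the two $b$-arcs inside the topological disk $\overline{Q}$, and a Jordan-curve / crosscut alternative (either $\{g\le 0\}$ joins $a_1$ to $a_2$, or $\{g\ge 0\}$ joins $b_1$ to $b_2$) should force some component of $Z$ to contain two vertices of differing $\psi$- or $\chi$-sign; an intermediate-value step along that connected component then delivers the required zero. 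Making this separation/connectedness argument rigorous for an \emph{arbitrary} Jordan quadrilateral, whose sides may be wild and for which $Z$ need not be a manifold, is the delicate part. I would handle it either through the connectedness principle for frontiers of complementary domains directly, or, if that proves awkward, by first establishing the result for polygonal quadrilaterals (where $Z$ is piecewise algebraic) and passing to the limit, checking that the three contact points cannot all collapse to a single vertex because the two contacts lying on an opposite pair of sides remain at distance comparable to the inradius.
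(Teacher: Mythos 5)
Your setup is sound and the route is genuinely different from the paper's, but the step you yourself flag as ``the heart of the matter'' is a real gap, and the tool you propose for it would not close it. The crosscut/Hex alternative --- either $\{g\le 0\}$ contains a continuum joining $a_1$ to $a_2$, or $\{g\ge 0\}$ contains a continuum joining $b_1$ to $b_2$ --- is true, but too weak: running the intermediate value theorem for $\psi$ along a continuum $K\subset\{g\le 0\}$ joining the $a$-sides produces a point where $\alpha_1=\alpha_2=\alpha\le\beta$, i.e.\ a disk touching the two $a$-sides only. That gives two opposite sides (the ``in particular'' clause) but not three; to get the third side the sign change of $\psi$ must be located on $Z=\{\alpha=\beta\}$ itself, about which the crosscut alternative says nothing. (A smaller inaccuracy: $Z$ need not be the common frontier of $\{g<0\}$ and $\{g>0\}$ --- it can have interior, e.g.\ points whose nearest boundary point is a reflex quad-vertex --- but this is harmless, since all you use is that $\overline{Q}\setminus Z$ is the disjoint union of those two open sets.)

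The gap is fillable, but the correct input is the Phragm\'en--Brouwer property of the closed disk (a consequence of unicoherence, essentially Janiszewski's theorem), not the Jordan curve theorem: if a closed subset of $\overline{Q}$ separates two points, then a single connected component of it already separates them. Take $p$ in the open arc $a_1$ and $q$ in the open arc $b_1$; they lie in different components of $\overline{Q}\setminus Z$ because that complement is the disjoint union of the open sets $\{g<0\}$ (containing the open $a$-arcs) and $\{g>0\}$ (containing the open $b$-arcs). You get one component $W$ of $Z$ with $p,q$ in different components of $\overline{Q}\setminus W$. Since $\overline{Q}$ is locally connected, components of $\overline{Q}\setminus W$ are open, so for any quad-vertex $v\notin W$ the two open sides ending at $v$ lie in a single component of $\overline{Q}\setminus W$. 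Hence $v_2=a_1\cap b_1$ must lie in $W$, and $v_1,v_3,v_4$ cannot all avoid $W$ (otherwise $a_1\sim b_2\sim a_2\sim b_1$ would chain $p$ to $q$). So the connected set $W$ contains two quad-vertices; as your four sign pairs are pairwise distinct, $\psi$ or $\chi$ changes sign along $W$, and its zero on $W\subset Z$ is the center of a three-sided disk. With this insertion your proof is complete and treats an arbitrary Jordan quadrilateral directly, with no approximation --- a genuine contrast with the paper, which reduces to polygonal quadrilaterals with right angles at the quad-vertices (Lemmas \ref{lemma:approxQuad} and \ref{lemma:limitDisk}) and then follows a path in the medial axis, quoting its path-connectedness from Choi--Choi--Moon and Fremlin; both proofs thus rest on a nontrivial classical connectedness theorem. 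Your fallback (polygonal case plus a limit) mirrors the paper's outer structure and also works; the collapse issue you raise is minor, since contact points on the two disjoint $a$-sides can never merge and the radii are bounded below by $s_{a}(Q_{n})/2\to s_{a}(Q)/2>0$, exactly as in Lemma \ref{lemma:limitDisk}.
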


Here the number three is obviously sharp, in the sense that there are quadrilaterals $Q$ so that $\partial Q\cap\partial D$ does not contain points from more than three sides of $Q$, where $D$ is any disk $D\subset Q$. For instance, in any rectangle that is not a square. If we have a crescent we can also have at most three, with two of them being the endpoints of a pair of adjacent sides, as it can be seen in Figure \ref{figure:crescent}.

\begin{figure}[h]
	\includegraphics[scale=0.6]{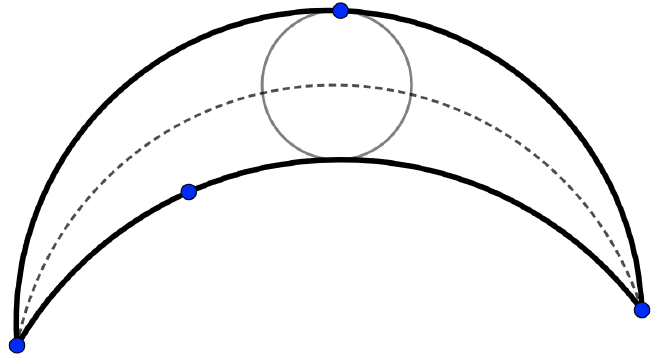}
	\caption{Example of a quadrilateral showing that the number 3 in Theorem \ref{mainTheorem} is sharp, where the figure corresponds to a crescent. The medial axis, which will be defined later in this section, is represented by the dotted curve.}
	\label{figure:crescent}
\end{figure}

We define as the internal distances between the $a$-sides of $Q$ as 
\begin{equation}\label{definition:s_a} s_{a}(Q)\coloneqq\inf\{\textrm{length}(\gamma)\colon\gamma\subset Q \textrm{ Jordan arc joining opposide } a\textrm{-sides}\},\end{equation}
and we similarly define $s_{b}(Q)$.

A quadrilateral can be conformally mapped to a rectangle so that the quad-vertices of the quadrilateral are mapped to the vertices of the rectangle (see 6.2.3 in \cite{AhlforsCA} or Chapter I, Section 2.4 in \cite{LehtoVirtanen}). The ratio between the length of the $a$-side and the $b$-side of this rectangle is a conformal invariant. We call the quantity $M=M(Q)=M(Q(v_{1},v_{2},v_{3},v_{4}))$ given as in Figure \ref{figure:modulusQuad} the \textbf{modulus} of $Q$.

The notion of quasiconformal mapping was introduced by H. Grötzsch in 1928. Since the modulus is a conformal invariant, there is no conformal map that maps a square to a rectangle (which is not a square) mapping vertices to vertices. Grötzsch wanted to find the most nearly conformal mapping that satisfied that. There are several (equivalent) definitions of planar quasiconformal mappings (see for example \cite{AhlforsQC, LehtoVirtanen}). We say that an orientation preserving homeomorphism $\phi\colon\Omega\subset\C\to\Omega^{'}\subset\C$ is $K$-quasiconformal if for every quadrilateral $Q\subset\Omega$, we have $$M(\phi(Q))\leq K M(Q).$$ 

We use Theorem \ref{mainTheorem} to give an alternative proof of the following result of Chrontsios-Garitsis and Hinkkanen in Section \ref{section:applications}.

\begin{corollary}[Theorem 1.1 in \cite{MR4735579}]\label{theorem:Hinkkanen}
For every $K\geq1$ there is a constant $\delta\in(0,1)$ depending only on $K$ such that every Jordan quadrilateral $Q$ with $M(Q)\in[1/K, K]$ contains a disk of radius $\delta\max\{s_{a}(Q),s_{b}(Q)\}$.
\end{corollary}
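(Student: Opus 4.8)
The plan is to combine the disk furnished by Theorem~\ref{mainTheorem} with the length--area (extremal length) estimates for the modulus found in Lehto and Virtanen \cite{LehtoVirtanen}. First I would apply Theorem~\ref{mainTheorem} to $Q$, obtaining a disk $D\subset Q$ of radius $r$ whose boundary meets three sides, among them a pair of opposite sides; after relabelling I assume these are the two $a$-sides and that the third side met is $b_{1}$. If $p_{1}\in a_{1}$ and $p_{2}\in a_{2}$ are contact points, the chord $[p_{1},p_{2}]$ lies in $\overline{D}\subset\overline{Q}$ and has length at most $2r$, so by the definition \eqref{definition:s_a} of $s_{a}(Q)$ we get $s_{a}(Q)\le 2r$, that is $r\ge s_{a}(Q)/2$. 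Hence the radius already controls the internal distance between the touched pair, and the only thing left is to show that it also controls the other one: it suffices to produce a constant $C=C(K)$ with $s_{b}(Q)\le C\,r$, since then $\max\{s_{a}(Q),s_{b}(Q)\}\le\max\{2,C\}\,r$ and $\delta=1/\max\{2,C\}$ works.

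For the remaining bound I would use the length--area lower estimate for the modulus. Writing $u(z)=\mathrm{dist}(z,b_{1})$ and $\ell(t)$ for the length of the level set $\{u=t\}\cap Q$, every arc joining the $b$-sides must sweep through all heights $t\in(0,s_{b}(Q))$, and testing the extremal length of that family against the metric $g(u(z))\,|\nabla u(z)|$ and optimising in $g$ yields the Lehto--Virtanen type bound
\[
M(Q)\ \ge\ \int_{0}^{s_{b}(Q)}\frac{dt}{\ell(t)}.
\]
If one can show that the transverse width is uniformly controlled by the disk, say $\ell(t)\le 2r$ for all $t$, then $M(Q)\ge s_{b}(Q)/(2r)$, whence $r\ge s_{b}(Q)/(2M(Q))\ge s_{b}(Q)/(2K)$, giving the corollary with $\delta=1/(2K)$. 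Applying the reciprocal inequality $1/M(Q)=M(Q')\ge s_{a}(Q)/(2r)$ to the conjugate quadrilateral $Q'$, together with $M(Q)\ge1/K$, covers symmetrically the case in which Theorem~\ref{mainTheorem} returns a disk touching the $b$-sides.

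The main obstacle is exactly the width control $\ell(t)\lesssim r$: a crude application of the estimate only returns Rengel's inequality $M(Q)\ge s_{b}(Q)^{2}/\mathrm{area}(Q)$, which is useless here because the area of $Q$ is not controlled (a thin tentacle attached to a side leaves $s_{a}(Q)$, $s_{b}(Q)$ and $M(Q)$ essentially unchanged while making the area arbitrarily large). Thus the heart of the argument must be geometric rather than area based, and this is precisely where the full strength of Theorem~\ref{mainTheorem} and the medial-axis structure enter: the three-fold tangency should force the inscribed disk to sit at a junction of the medial axis, bounding the length of the level sets $\{u=t\}$ by a fixed multiple of $r$ and ruling out the fat domains that defeat the naive estimate. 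Establishing this uniform transverse bound is the step I expect to be delicate.
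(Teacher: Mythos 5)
Your first step coincides with the paper's: the chord argument giving $r\ge s_{a}(Q)/2$ is exactly how the proof in Section~\ref{section:applications} begins. The gap is in the second half, and it is genuine: the width bound $\ell(t)\le 2r$ that your length--area argument needs is not merely ``delicate,'' it is false, and the three-tangency/medial-axis heuristic cannot rescue it, because the inscribed disk is a local object while $\ell(t)$ is global. Concretely, let $Q$ be the unit square with quad-vertices at its corners, and attach to the side $b_{1}$, through a thin neck occupying heights $[0.4,0.6]$ of the right edge, a huge disk (a ``balloon'') of radius $\rho$. Then $s_{a}(Q)=s_{b}(Q)=1$; the modulus stays in $[1,2]$ (test with a multiple of the characteristic function of the square: any curve joining the $a$-sides must sweep the heights $[0,0.4]\cup[0.6,1]$ inside the square); and every disk $D\subset Q$ whose boundary meets both $a$-sides has radius exactly $1/2$, since touching the bottom edge forces tangency to the line $y=0$ from above (center at height $r$) and touching the top edge forces $r=1-r$. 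But $b_{1}$ now runs around the balloon, so for \emph{every} $t\in(0,s_{b}(Q))$ the level set $\{\mathrm{dist}(\cdot,b_{1})=t\}\cap Q$ contains an arc of a circle of radius about $\rho-t$ deep inside the balloon; hence $\ell(t)\gtrsim\rho\gg 2r$ for all relevant $t$, and your integral $\int_{0}^{s_{b}}dt/\ell(t)$ is of order $1/\rho$, so the lower bound $M(Q)\ge\int_{0}^{s_{b}}dt/\ell(t)$, while true, is vacuous. No route through level-set lengths can produce $s_{b}(Q)\le C(K)\,r$.

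What closes the gap is the estimate you did not invoke: the Lehto--Virtanen double inequality, stated in the paper as Proposition~\ref{Proposition:LehtoVirtanen}. Its left-hand side bounds $M(Q)$ from below by an increasing, unbounded function of the ratio $s_{b}(Q)/s_{a}(Q)$ alone, with no area or level-set quantity appearing; hence $M(Q)\le K$ forces $s_{b}(Q)/s_{a}(Q)\le L$ for some $L=L(K)$, and symmetrically $M(Q)\ge 1/K$ forces $s_{a}(Q)/s_{b}(Q)\le L$, which handles the relabelled case where the disk touches the $b$-sides. Combined with your inequality $r\ge s_{a}(Q)/2$ this gives $r\ge s_{b}(Q)/(2L)$, so $\delta=1/(2L)$ works; this is precisely the paper's proof. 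The reason this estimate survives your balloon/tentacle objection is that Lehto and Virtanen prove it with a logarithmic test metric concentrated in an annulus about a boundary point, so neither $\mathrm{area}(Q)$ nor any transverse width enters---exactly the defect of Rengel's inequality that you correctly identified, but repaired by a better choice of admissible metric rather than by extra geometric control on $Q$.
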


The notion of modulus of a quadrilateral is closely related to what is commonly known as \textit{modulus of a path family}. By a path family $\Gamma$ we mean a family of curves in $\C$, where each $\gamma\in\Gamma$ is locally rectifiable. We define the modulus of this path family as 
\begin{equation}\label{definition:pathModulus} 
	M(\Gamma)\coloneqq\inf_{\rho \textrm{ admissible}}\int_{Q}\rho^{2}dm(z),
\end{equation}
where we say a non-negative Borel function $\rho$ is admissible if 
$$ L(\rho)\coloneqq\inf_{\gamma\in\Gamma}\int_{\gamma}\rho|dz|\geq1.$$
The modulus is a conformal invariant. Moreover, if we define the path family $\Gamma$ to consist of all those locally rectifiable paths joining opposite $a$-sides of a quadrilateral $Q$, then the modulus of this path family agrees with the modulus of the quadrilateral, i.e. $M(\Gamma)=M(Q)$.

Our approach to Theorem \ref{mainTheorem} is based on using properties of the medial axis. Given any open proper subset $\Omega$ of $\R^{n}$, we define the \textbf{medial axis} as $$ MA(\Omega)=\{z\in\Omega\colon\exists\textrm{ distinct } w, w'\in\partial\Omega \textrm{ s.t. } d(z,w)=d(z,w')=d(z,\partial\Omega)\},$$ i.e. it consists of the centers of balls contained in $\Omega$ so that they intersect $\partial\Omega$ in more than one point. 

Theorem \ref{mainTheorem} proves that the medial axis of a Jordan domain $\Omega\subset\C$ is always non-empty, since for any choice of four distinct points in $\partial\Omega$ there is a disk $D$ so that $\partial D\cap\partial Q$ contains points of three different sides of $Q$, and in particular of two disjoint sides of $Q$, i.e. it contains more than one point. In a much greater generality than what we cover in this document, i.e. for bounded open subsets $\Omega\subset\R^{n}$, in \cite{MR1434446} Fremlin shows that the medial axis is an $F_{\sigma}$ set of Hausdorff dimension at most $n-1$ and that $MA(\Omega)$ is connected if and only if $\Omega$ is. For a simply connected planar domain $\Omega\subset\C$, he shows show that, in addition, it is a union of countably many rectifiable arcs. In this case, in \cite{MR13776} Erd\"os previously proved the bound on the Hausdorff dimension mentioned before.  The medial axis is a subset of the \textbf{central set} of $\Omega$, which is the set of the centers of the maximal balls contained in $\Omega$. In \cite{MR2390513} Bishop and Habokyan prove that this set can have Hausdorff dimension arbitrarily close to $2$, even though the medial axis can have Hausdorff dimension at most $1$.

Choi, Choi and Moon characterized in \cite{MR1491036} the medial axis of a Jordan domain $\Omega\subset\C$ so that $\partial\Omega$ is a finite union of analytic arcs. In the case where $\partial\Omega$ is a finite union of line segments, the medial axis consists of a finite union of analytic arcs each of which is either a line segment or a parabola. 

The medial axis has been proved to be useful in the literature. We cite some references in which this set has been proved to be useful in complex analysis; in \cite{MR2671015} Bishop uses the medial axis to compute the conformal map in linear time, and in \cite{MR2904137} he uses it to find a tree-like decomposition of any simply-connected domain into Lipschitz domains, improving a previously known result by Jones \cite{MR1069238}.

The proof of Theorem \ref{mainTheorem} is structured as follows. In Section \ref{section:preliminaries} we show in Lemma \ref{lemma:limitDisk} that if the result is true for a sequence of quadrilaterals converging from inside to a quadrilateral $Q$, then it is also true for $Q$. We also prove in Lemma \ref{lemma:approxQuad} that any quadrilateral $Q$ is a limit from inside of a sequence of quadrilaterals $\{Q_{n}\}$ so that each $\partial Q_{n}$ is a finite union of line segments that make an angle of $\pi/2$ radians at each one of the quad-vertices.

In \cite{MR1491036,MR1434446} it is proved that the medial axis of a bounded quadrilateral is connected. In Section \ref{section:proofMain} we use this fact to prove Theorem \ref{mainTheorem}, together with the results from Section \ref{section:preliminaries}.

In Section \ref{section:applications}, we prove Corollary \ref{theorem:Hinkkanen} as an application of Theorem \ref{mainTheorem}.

\subsection*{Acknowledgments}

I would like to thank Chris Bishop for his continued guidance and support, and for reading early drafts of the paper which improved the presentation via his comments and corrections. I would also like to thank Dimitrios Ntalampekos for his comments and corrections, which also improved the presentation. The typos and suggestions that the reviewer has highlighted also improved the paper.

\section{Preliminaries}\label{section:preliminaries}

Given any quadrilateral $Q$ we will approximate it first with a sequence of quadrilaterals $Q_{n}$ from inside, so that they satisfy Theorem \ref{mainTheorem}. We then prove that this convergence allows us to conclude that the original quadrilateral also has such a disk so that the boundary intersects the boundary of $Q$ on three sides. 

\begin{definition}[Convergence from inside]
We say that a sequence of quadrilaterals $\{Q_{n}\}$ with sides $a_{j}^{n}, b_{j}^{n}$ for $j=1,2$ converge to the quadrilateral $Q$ from inside if
\begin{enumerate}[label=(\roman*)]
	\item $\overline{Q_{n}}\subset\overline{Q}$ for all $n$.
	\item For all $\eps>0$, there exists $n_{\eps}$ so that for $n\geq n_{\eps}$ and $j\in\{1,2\}$, the Hausdorff distance (see p. 280-281 in \cite{Munkres}) between $a_{j}^{n}$ (resp. $b_{j}^{n}$) and $a_{j}$ (resp. $b_{j}$) is less than $\eps$.
\end{enumerate}
\end{definition}

\begin{figure}[h]
	\includegraphics[scale=1]{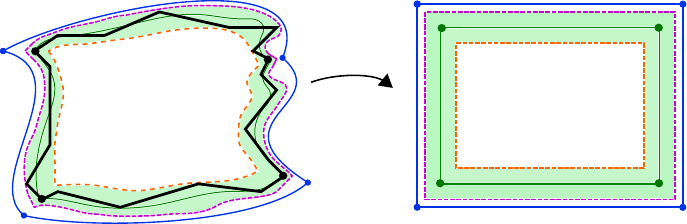}
	\setlength{\unitlength}{\textwidth}
	\centering
	\caption{Approximation of any quadrilateral $Q$ from inside via polygons as in Lemma \ref{lemma:approxQuad}. The dashed curves represent the quadrilaterals converging from inside to $Q$, whereas the thick curve represents the polygon obtained in Lemma \ref{lemma:approxQuad}.}
	\label{figure:approxQuad}
\end{figure}

Convergence from inside implies convergence of the corresponding moduli, as we see in Lemma \ref{lemma:LehtoVirtanen}. The proof is the same as in \cite{LehtoVirtanen}, which we include in this document for completeness.

\begin{lemma}[Lemma 4.3, p.26 in \cite{LehtoVirtanen}]\label{lemma:LehtoVirtanen}
If a sequence of quadrilaterals $Q_{n}$ converges from inside to the quadrilateral $Q$, then $M(Q_{n})\to M(Q)$.
\end{lemma}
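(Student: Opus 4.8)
The plan is to prove the single inequality $\limsup_{n} M(Q_n)\le M(Q)$ under convergence from inside, and then deduce the matching lower bound by applying that same inequality to the \emph{conjugate} quadrilaterals. Recall that if $\tilde Q$ denotes $Q$ with the roles of its $a$- and $b$-sides interchanged, then $M(Q)\,M(\tilde Q)=1$, since the two quantities correspond to the complementary (vertical and horizontal) path families in the rectangle $[0,M]\times[0,1]$. Because the definition of convergence from inside is symmetric in the $a$- and $b$-sides, $Q_n\to Q$ from inside forces $\tilde Q_n\to\tilde Q$ from inside as well. Hence, once the upper bound is established in general, applying it to $\tilde Q_n\to\tilde Q$ gives $\limsup_n 1/M(Q_n)\le 1/M(Q)$, i.e. $\liminf_n M(Q_n)\ge M(Q)$, and the two estimates combine to the claimed convergence $M(Q_n)\to M(Q)$.

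For the upper bound I would work with the extremal metric transported from the rectangle. Let $\varphi\colon Q\to R=[0,M]\times[0,1]$ be the conformal map sending the quad-vertices to the corners of $R$ and the $a$-sides $a_1,a_2$ to the horizontal sides $\{y=0\}$ and $\{y=1\}$. Since $Q$ is a Jordan domain, Carath\'eodory's theorem lets $\varphi$ extend to a homeomorphism $\overline Q\to\overline R$, which is then uniformly continuous on the compact set $\overline Q$. The metric $\rho_0\coloneqq|\varphi'|$ is admissible for the $a$-path family $\Gamma$ of $Q$ (the $\varphi$-image of any arc in $Q$ joining $a_1$ to $a_2$ runs from $\{y=0\}$ to $\{y=1\}$, so has length at least $1$), and it realizes the modulus: by the conformal change of variables $\int_Q\rho_0^2\,dm=\mathrm{area}(R)=M=M(Q)$.

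The crux is to check that a slight rescaling of $\rho_0$ is admissible for the $a$-path family $\Gamma_n$ of $Q_n$. Fix $\eps>0$ and let $\delta=\delta(\eps)\to 0$ be a modulus-of-continuity bound for $\varphi$. For $n$ large, condition (ii) of convergence from inside gives $a_1^n\subset N_\eps(a_1)$ and $a_2^n\subset N_\eps(a_2)$, so uniform continuity forces $\varphi(a_1^n)\subset\{0\le y\le\delta\}$ and $\varphi(a_2^n)\subset\{1-\delta\le y\le 1\}$. Consequently the $\varphi$-image of any $\gamma_n\in\Gamma_n$ joins a point with $y\le\delta$ to a point with $y\ge 1-\delta$, hence has length at least $1-2\delta$; that is, $\int_{\gamma_n}\rho_0\,|dz|\ge 1-2\delta$, so $\rho_0/(1-2\delta)$ is admissible for $\Gamma_n$. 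Using $Q_n\subset Q$ and $\rho_0\ge 0$,
$$M(Q_n)\le\frac{1}{(1-2\delta)^2}\int_{Q_n}\rho_0^2\,dm\le\frac{M(Q)}{(1-2\delta)^2}.$$
Letting $n\to\infty$ (so $\eps\to 0$ and $\delta\to 0$) yields $\limsup_n M(Q_n)\le M(Q)$.

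The hard part is precisely this admissibility step: Hausdorff closeness of the approximating $a$-sides to $a_1,a_2$ is measured in the $z$-plane, whereas admissibility must be verified in the rectangle $R$. Bridging the two requires the boundary extension of $\varphi$ (Carath\'eodory) together with its uniform continuity on $\overline Q$, and some care that the endpoints of arcs in $\Gamma_n$ land in thin collars of the \emph{correct} horizontal sides rather than drifting toward the vertical sides of $R$. Everything else --- the reciprocity $M(Q)\,M(\tilde Q)=1$, the monotonicity $\int_{Q_n}\rho_0^2\,dm\le\int_Q\rho_0^2\,dm$, and the symmetry of convergence from inside under conjugation --- is routine.
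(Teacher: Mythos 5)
Your proposal is correct and follows essentially the same route as the paper: both proofs transport the problem to the rectangle via the conformal map $\varphi$ (using its uniformly continuous boundary extension), observe that curves joining the approximating $a$-sides have image length at least $1-2\delta$ so that the normalized flat/extremal metric is admissible, bound the mass by $\mathrm{area}(R)=M(Q)$, and obtain the lower bound from the reciprocal relation for the conjugate quadrilateral. The only cosmetic difference is that you pull the Euclidean metric back to $Q$ as $\rho_0=|\varphi'|$, while the paper pushes $Q_n$ forward and argues directly in $R$ with $s_a(\varphi(Q_n))$, $s_b(\varphi(Q_n))$; these are the same computation by conformal invariance.
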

\begin{proof}
Take $\varphi\colon Q\to R$ conformal as in Figure \ref{figure:modulusQuad}, then $\varphi$ is uniformly continuous in $Q$, thus the sequence of quadrilaterals $\varphi(Q_{n})$ converges from inside to the quadrilateral $\varphi(Q)=R$. This means that for $\eps>0$ small enough we have $s_{a}(\varphi(Q_{n}))\geq 1-2\eps$ and $s_{b}(\varphi(Q_{n}))\geq M(Q)-2\eps$. Also, $m(\varphi(Q_{n}))\leq m(\varphi(Q))=M(Q)$. Therefore, if in (\ref{definition:pathModulus}) we take the Euclidean metric normalized so that it is admissible, i.e. $\rho(z)=C|z|$, we have
$$\frac{\left(M(Q)-2\eps\right)^{2}}{M(Q)} \leq M(\varphi(Q_{n}))\leq\frac{M(Q)}{\left(1-2\eps\right)^{2}},$$
where we have used that $M(Q(v_{2},v_{3},v_{4},v_{1}))=1/M(Q)=1/M(Q(v_{1},v_{2},v_{3},v_{4}))$ (that is, the modulus of the conjugate quadrilateral is the reciprocal of the original modulus). Since $M(\varphi(Q_{n}))=M(Q_{n})$, we see that $M(Q_{n})\to M(Q)$ as $\eps\to0$.
\end{proof}

Lemma \ref{lemma:LehtoVirtanen} also provides a way to regard any quadrilateral $Q$ as a limit from inside of, for example, analytic quadrilaterals or polygonal quadrilaterals. More precisely, we have the following result. 

\begin{lemma}\label{lemma:approxQuad}
Any quadrilateral $Q$ is a limit from inside of a nested sequence of quadrilaterals $Q_{n}$ so that the boundary of each one of the $Q_{n}$ is a finite union of non-trivial line segments. Moreover, $Q_{n}$ can be taken so that the two segments that meet at the quad-vertices of each one of the quadrilaterals $Q_{n}$ form an angle of $\pi/2$ radians.
\end{lemma}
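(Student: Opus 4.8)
The plan is to approximate $Q$ in two stages: first by \emph{analytic} quadrilaterals having perpendicular corners, obtained by conformally straightening $Q$ to a rectangle, and then by inscribed polygons. The conformal stage is what produces the right angles essentially for free, since conformal maps preserve angles at interior points and a rectangle meets its own sides at right angles; the polygonal stage is then a routine chord approximation away from the corners, with the right angle at each corner being the only delicate point.

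For the first stage I would fix a conformal map $\varphi\colon Q\to R$ onto the rectangle $R=[0,M]\times[0,1]$ sending quad-vertices to corners, as in Figure~\ref{figure:modulusQuad}. Since $Q$ is a Jordan domain, Carath\'eodory's theorem lets $\varphi$ extend to a homeomorphism $\overline{Q}\to\overline{R}$, so $\varphi^{-1}$ is uniformly continuous on $\overline{R}$. Let $R_{n}=[1/n,M-1/n]\times[1/n,1-1/n]$ for $n$ large, so that $\overline{R_{n}}\subset\operatorname{int}R_{n+1}$ and the $R_{n}$ exhaust $R$, and set $\widetilde{Q}_{n}=\varphi^{-1}(R_{n})$. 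Each $\widetilde{Q}_{n}$ is a Jordan domain whose four sides are analytic arcs (images of line segments under the holomorphic map $\varphi^{-1}$, with nonvanishing derivative) meeting pairwise at right angles at the four points $\varphi^{-1}(p)$, $p$ a corner of $R_{n}$, because $\varphi^{-1}$ is conformal at these interior points of $R$. The inclusions $\overline{R_{n}}\subset\operatorname{int}R_{n+1}$ give $\overline{\widetilde{Q}_{n}}\subset\widetilde{Q}_{n+1}$, so the sequence is nested, and uniform continuity of $\varphi^{-1}$ together with the exhaustion $R_{n}\nearrow R$ shows that the sides of $\widetilde{Q}_{n}$ converge in Hausdorff distance to those of $Q$; hence $\widetilde{Q}_{n}\to Q$ from inside.

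For the second stage I would, for each fixed $n$, inscribe polygonal quadrilaterals $P_{n,k}$ converging to $\widetilde{Q}_{n}$ from inside as $k\to\infty$, and then extract a diagonal subsequence $Q_{n}:=P_{n,k(n)}$ that is nested and converges to $Q$ from inside; the positive gap between $\overline{\widetilde{Q}_{n}}$ and $\widetilde{Q}_{n+1}$ is exactly what makes the nesting arrangeable. Away from the four corners, each analytic side of $\widetilde{Q}_{n}$ is a regular $C^{1}$ arc, which I would approximate from inside by chords through finely sampled points pushed slightly into $\widetilde{Q}_{n}$; this is the standard construction and converges in Hausdorff distance.

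The hard part is the corner. Because the interior opening angle at a corner $c$ of $\widetilde{Q}_{n}$ is exactly $\pi/2$, the only pair of mutually perpendicular directions lying in the closed tangent wedge at $c$ are the two tangent directions themselves; thus a right-angle corner of the inscribed polygon is forced to align with the tangents and cannot sit exactly at $c$ when the arcs curve into the domain. I would resolve this by placing the polygon's quad-vertex at a point $c'$ obtained by moving $c$ a small distance inward along the interior angle bisector, and emanating from $c'$ two perpendicular segments parallel to the two tangent lines at $c$. Since the analytic arcs deviate from their tangent lines only to second order, for a sufficiently short segment length these segments remain inside $\widetilde{Q}_{n}$; their far endpoints are then joined to the chordal approximation of the arc interiors, producing a simple polygonal Jordan curve with a genuine right angle at $c'$. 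Carrying out this local estimate quantitatively, and checking that the resulting polygon is simple and still converges from inside as the corner shift and chord spacing tend to $0$, is the crux of the argument; the remainder is routine.
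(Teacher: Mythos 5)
Your proposal is correct in outline, and its first stage coincides exactly with the paper's: both pull back a nested exhaustion of the rectangle $R$ by subrectangles under the boundary extension of the conformal map $\varphi\colon Q\to R$, obtaining analytic quadrilaterals with right-angle corners that converge to $Q$ from inside. The second stage is where you genuinely diverge. The paper does \emph{not} inscribe polygons inside $\varphi^{-1}(R_{2n})$: citing a polygonal approximation lemma (Lemma 2 in \cite{MR565480}), it produces a polygonal Jordan curve lying in the two-sided annular neighborhood $\varphi^{-1}\left(R_{2n+1}\setminus\overline{R_{2n-1}}\right)$ of $\varphi^{-1}(\partial R_{2n})$, and then reroutes it near each quad-vertex of $\varphi^{-1}(R_{2n})$ so that it passes exactly through that vertex with two perpendicular segments. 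Because the polygon is allowed to overshoot to either side of the analytic curve, provided it stays in the annulus, the right angle can sit at the exact corner with no tangency estimate, and nesting is automatic since consecutive annuli are disjoint. Your insistence on inscribed polygons is precisely what creates your ``hard part'': as you correctly observe, two perpendicular directions in a closed right-angle tangent wedge must be the tangent directions themselves, so an inscribed right angle cannot sit at the corner when the arcs bend inward; this forces the bisector displacement $c\mapsto c'$ and the second-order deviation estimate (which does work --- with displacement $\delta$ one can take segment length of order $\sqrt{\delta}$), plus a diagonal extraction to restore nesting. Both arguments succeed; the paper's two-sided slack trades your quantitative corner analysis for a citation and gets nesting for free, while your version is self-contained but must actually carry out the local estimate, the simplicity check, and the diagonalization that you sketch.
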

\begin{proof}
Let $\varphi\colon Q\to R$ be conformal, where $R$ is a rectangle with vertices $0,M,i$ and $M+i$, so that the quad-vertices of the quadrilateral are mapped to the vertices of the rectangle. Define $R_{n}=\{z\in R\colon d(z,\partial R)\geq 1/n\}$ for $n\in\N$. Then $R_{n}$ is a compact subset of $R$ and the sequence of quadrilaterals $\{R_{n}\}_{n\in\N}$ converges to the quadrilateral $R$ from inside. Where we take the quad-vertex $v_{j}$ of the quadrilateral $R_{n}$, as the vertex of the rectangle $R_{n}$ that is the closest to the image under $\varphi$ of the quad-vertex $v_{j}$ of the quadrilateral $Q$ (for $j=1,2,3,4$). Therefore, the sequence $\{\varphi^{-1}(R_{n})\}$ converges to $Q$ from inside.

We now obtain the sequence of polygons that approximates $Q$ from the inside. By Lemma 2 in \cite{MR565480} we can approximate $\varphi^{-1}(\partial R_{2n})$ via a polygonal Jordan curve $\tilde{P}_{n}$ that is contained in $\varphi^{-1}\left(R_{2n+1}\setminus \overline{R_{2n-1}}\right)$. Given this approximation of $\varphi^{-1}(\partial R_{2n})$ by a polygonal $\tilde{P}_{n}$, we can now modify it within the annulus $\varphi^{-1}\left(R_{2n+1}\setminus \overline{R_{2n-1}}\right)$ so that it contains the quad-vertices of the quadrilateral $\varphi^{-1}(R_{2n})$ and so that at the quad-vertices of the quadrilateral $\varphi^{-1}(R_{2n})$ there are two segments of the polygonal curve that meet at an angle of $\pi/2$ degrees. This yields a polygonal $P_{n}$. We define $Q_{n}$ as the interior of the polygonal Jordan curve $P_{n}$ and we take as quad-vertices the quad-vertices of the quadrilateral $\varphi^{-1}(R_{2n})$, which are part of the polygonal by construction. This procedure has been illustrated in Figure \ref{figure:approxQuad}. Also, if two adjacent line segments of $\partial Q_{n}$ are part of the same line, we consider them as only one line segment.
This new sequence of quadrilaterals $\{Q_{n}\}_{n\in\N}$ converges to $Q$ from inside by construction.
\end{proof}

Lemma 2 in \cite{MR4735579} can also be used to construct the polygonal approximations given in Lemma \ref{lemma:approxQuad}.

Next we will show that if Theorem \ref{mainTheorem} holds for a sequence of quadrilaterals that converge from inside to a quadrilateral $Q$, then the same holds for $Q$.

\begin{lemma}\label{lemma:limitDisk}
Let $Q$ be a quadrilateral and suppose that $Q_{n}$ is a increasing sequence of quadrilaterals converging to $Q$ from inside. If every $Q_{n}$ has a disk $D_{n}$ so that $\partial Q_{n}\cap\partial D_{n}$ contains points of three sides of $Q_{n}$, then the same holds for $Q$.
\end{lemma}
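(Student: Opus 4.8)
The plan is to extract convergent subsequences of the centers and radii of the disks $D_{n}$ and pass to a limit disk $D$, then verify that $\partial D$ meets three sides of $Q$. First I would note that since $\overline{Q_{n}}\subset\overline{Q}$ and $Q$ is bounded, the centers $z_{n}$ of the $D_{n}$ lie in the bounded set $\overline{Q}$ and the radii $r_{n}$ are bounded above by $\operatorname{diam}(Q)$. By Bolzano--Weierstrass I can pass to a subsequence so that $z_{n}\to z_{0}$ and $r_{n}\to r_{0}\geq 0$; write $D$ for the disk centered at $z_{0}$ of radius $r_{0}$. Because each $\overline{D_{n}}\subset\overline{Q_{n}}\subset\overline{Q}$, the limit satisfies $\overline{D}\subset\overline{Q}$, and one checks $D\subset Q$ (the degenerate case $r_{0}=0$ must be ruled out, which I address below).

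Next I would track the three contact points. For each $n$ the hypothesis gives three points $p_{n}^{(1)},p_{n}^{(2)},p_{n}^{(3)}\in\partial D_{n}\cap\partial Q_{n}$ lying on three distinct sides of $Q_{n}$, say on $a_{1}^{n},a_{2}^{n}$ and one of the $b$-sides (the argument is symmetric in the labeling). Passing to a further subsequence, each sequence of contact points converges, $p_{n}^{(k)}\to p^{(k)}$. Since $p_{n}^{(k)}\in\partial D_{n}$ and the disks converge, each limit point satisfies $|p^{(k)}-z_{0}|=r_{0}$, i.e. $p^{(k)}\in\partial D$. The crucial input is condition (ii) of convergence from inside: because the sides $a_{j}^{n}$ (resp.\ $b_{j}^{n}$) converge in Hausdorff distance to $a_{j}$ (resp.\ $b_{j}$), and $p_{n}^{(k)}$ lies on a fixed side of $Q_{n}$ for all large $n$, the limit $p^{(k)}$ lies on the corresponding side of $Q$; in particular $p^{(k)}\in\partial Q$. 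Thus $p^{(1)},p^{(2)},p^{(3)}\in\partial D\cap\partial Q$ and they lie on three different sides of $Q$.

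The main obstacle, and the step requiring the most care, is ruling out degeneracies: that $r_{0}=0$, that the disk $D$ fails to be contained in $Q$, or that two of the limit contact points coincide or land on a shared quad-vertex rather than on the interiors of three genuinely distinct sides. To control $r_{0}$ from below I would use that the three contact points $p_{n}^{(1)},p_{n}^{(2)},p_{n}^{(3)}$ lie on disjoint sides of $Q_{n}$, at least two of which are opposite; since opposite sides of $Q_{n}$ converge to the opposite sides $a_{1},a_{2}$ (resp.\ $b_{1},b_{2}$) of the nondegenerate quadrilateral $Q$, their mutual distance is bounded below by a positive constant for large $n$. As these contact points lie on a common circle of radius $r_{n}$, this forces $r_{n}$ to stay bounded away from $0$, hence $r_{0}>0$ and $D$ is a genuine disk; combined with $\overline{D}\subset\overline{Q}$ and $r_{0}>0$ one concludes $D\subset Q$. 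Finally, the same lower bound on the separation between the limit points coming from opposite sides guarantees that the $p^{(k)}$ are genuinely distinct and distributed over three distinct sides, completing the argument.
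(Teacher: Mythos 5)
Your proposal follows essentially the same route as the paper's proof: pass to subsequences so that the same three sides are touched, extract convergent subsequences of centers, radii, and contact points, use the Hausdorff convergence of the sides to place the limit contact points on the corresponding sides of $Q$, and bound the radii below using that two of the contact points lie on a pair of opposite sides (the paper writes $2r_{n}\geq s_{a}(Q_{n})\to s_{a}(Q)>0$; your Euclidean separation of the disjoint closed arcs $a_{1},a_{2}$ serves the same purpose).

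One caveat on your final step: the claim that the three limit points are \emph{genuinely distinct} is neither implied by your separation bound nor true in general. The separation only distinguishes the two points on opposite sides; the point coming from the $b$-side can collapse onto one of them at a shared quad-vertex, and this really happens --- the paper's crescent example (Figure \ref{figure:crescent}) and the remark immediately after the lemma make exactly this point. Fortunately the lemma does not ask for distinct points or for points in the interiors of the sides: a quad-vertex belongs to both of its adjacent closed sides, so even after such a collapse the set $\partial D\cap\partial Q$ still contains points of three sides of $Q$. Your argument up to (and excluding) the distinctness claim already proves the statement; the distinctness claim itself should simply be dropped.
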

\begin{proof}
Let $D_{n}=D(c_{n},r_{n})$ be the sequence of corresponding disks. By passing to a subsequence and changing the labeling of the sides of $Q$ if necessary, we can suppose that their boundary circles all have points on opposite $a$-sides and on the side $b_{1}$. Observe that:
\begin{enumerate}[label=(\roman*)]
	\item $\textrm{diam}(Q)\geq \textrm{diam}(Q_{n})\geq 2 r_{n}\geq s_{a}(Q_{n})$, and $s_{a}(Q_{n})\to s_{a}(Q)>0$. So $\{r_{n}\}$ is a bounded sequence, bounded below away from $0$. 
	\item Given $\eps>0$ small enough, there exists $N$ so that for $n\geq N$ we have $c_{n}\in\{z\in Q\colon \textrm{dist}(z,\partial Q)\geq (s_{a}(Q)-\eps)/2\}=K_{\eps}$, which is a compact set. Observe that if $\eps'<\eps$, then $K_{\eps}\supset K_{\eps'}$.
	\item For each $D_{n}=D(c_{n},r_{n})$ we have at least points $p_{n}\in a_{1}^{n}$, $q_{n}\in a_{2}^{n}$ and $w_{n}\in b_{1}^{n}$ on $\partial D_{n}$, where $a_{1}^{n}$ is the $a_{1}$ side of $Q_{n}$, $a_{2}^{n}$ is the $a_{2}$ side of $Q_{n}$ and $b_{1}^{n}$ is the $b_{1}$ side of $Q_{n}$. 
\end{enumerate}
Take a subsequence so that $c_{n_{j}}, r_{n_{j}}, p_{n_{j}}, q_{n_{j}}, w_{n_{j}}$ converge to $c\in K_{\eps}, r>0, p\in a_{1}, q \in a_{2}$ and $w\in b_{1}$ (now referring to the corresponding sides of $Q$). Then the disk $D=D(c, r)$ is contained in $Q$ and $\{p, q, w\}\subset \partial D\cap\partial Q$, which are points of three sides of $Q$.
\end{proof}

Observe that in the previous Lemma we are not excluding the possibility of $w$ being equal to either $p$ or $q$. This is the case, for example, if we have a crescent, as it can be seen in Figure \ref{figure:crescent}.

\section{Proof of the main theorem}\label{section:proofMain}

By Lemma \ref{lemma:approxQuad}, given any quadrilateral we can build a sequence of quadrilaterals $Q_{n}$, so that $\partial Q_{n}$ is a finite union of non-trivial line segments, with the two segments meeting at each one of the quad-vertices making an angle of $\pi/2$ radians. By Lemma \ref{lemma:limitDisk}, if we show that Theorem \ref{mainTheorem} holds for such quadrilaterals, then the Theorem is proved. This is what we prove in Lemma \ref{lemma:main}. Before we state and prove Lemma \ref{lemma:main}, we need some facts from the medial axis of a polygonal quadrilateral $Q$.

Given any two $c, \tilde{c}\in MA(Q)$, where $MA(Q)\subset\overline{Q}$ is the medial axis of a polygonal Jordan domain $Q$, by Theorem 7.3 in \cite{MR1491036} (or Theorem 2D in \cite{MR1434446}) the medial axis is path-connected. This means that there is a (continuous) path $\gamma\colon[0,1]\to MA(Q)\subset\overline{Q}$ so that $\gamma(0)=c$ and $\gamma(1)=\tilde{c}$. Moreover, by Corollary 8.1 in \cite{MR1491036}, $MA(Q)$ is a strong deformation retract of $Q$, i.e. $MA(Q)$ has a tree-like structure. As it was mentioned before, it is enough to prove Theorem \ref{mainTheorem} for the particular case in which $Q$ is a polygonal quadrilateral so that the two line segments meeting at each one of the quad-vertices make an angle of $\pi/2$ radians. Since $\pi/2<\pi$, then every quad-vertex of the quadrilateral $Q$ is a limit point of $MA(Q)$. Therefore, there is a simple path $\gamma\colon[0,1]\to MA(Q)\subset\overline{Q}$ so that $\gamma(0)=v_{1}$ and $\gamma(1)=v_{3}$ (opposite quad-vertices of the quadrilateral). This simple path $\gamma$ is represented in Figure \ref{figure:exMedialAxis}. Since the distance to the boundary function is continuous, given our choice of $\gamma$, the function $t\mapsto r_{t}$, where $r_{t}$ is the maximal radius so that $D(\gamma(t),r_{t})\subset\overline{Q}$ (i.e. the distance of $\gamma(t)$ to the boundary), is also continuous. Hence if we define a topology on the space of maximal disks by taking the product topology on $\overline{Q}\times[0,\textrm{diam}(Q)]$, then the function $t\mapsto(\gamma(t),r_{t})$ is continuous, i.e. the maximal disks change \textit{continuously}.

\begin{figure}[h]
	\includegraphics[scale=0.9]{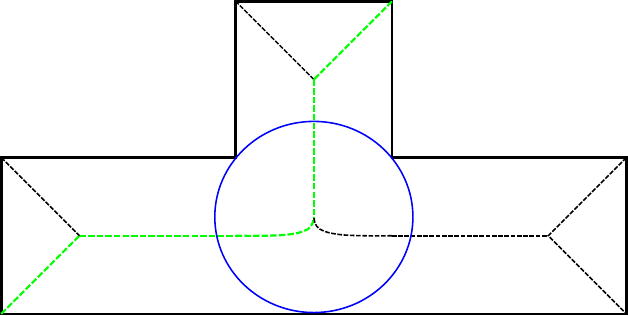}
	\setlength{\unitlength}{\textwidth}
	\put(-0.69,0.0){\scriptsize $v_{1}$}
	\put(-0.237,0.33){\scriptsize $v_{3}$}
	\put(-0.6637,-0.005){\textcolor{green}{$\bullet$}}
	\put(-0.255,0.324){\textcolor{green}{$\bullet$}}
	\put(-0.5,0.1){\scriptsize \textcolor{green}{$\gamma$}}
	\centering
	\caption{Illustration of the proof of Lemma \ref{lemma:main}. The medial axis is the union of the dashed curves. The circle has its center in the medial axis. The simple path $\gamma$ joining $v_{1}$ and $v_{3}$ within the medial axis is represented in a lighter color.}
	\label{figure:exMedialAxis}
\end{figure}

It can also be proved that the simple path $\gamma\colon[0,1]\to MA(Q)\subset\overline{Q}$ so that $\gamma(0)=v_{1}$ and $\gamma(1)=v_{3}$, as we considered before, satisfies the following: given the maximal disk $D_{t}=D(\gamma(t),r_{t})$, then $\partial D_{t}$ intersects the two connected components of $\partial Q\setminus\{v_{1},v_{3}\}$.

We can now prove Theorem \ref{mainTheorem} when $Q$ is a polygonal Jordan domain so that the two line segments that meet at the quad-vertices of $Q$ make an angle of $\pi/2$.

\begin{lemma}\label{lemma:main}
Let $Q=Q(v_{1},v_{2},v_{3},v_{4})$ be a polygonal quadrilateral so that $\partial Q$ is a finite union of non-trivial line segments that form an angle of $\pi/2$ radians at each one of the quad-vertices $v_{j}$. Then there exists a disk $D\subset Q$ so that $\partial D\cap\partial Q$ contains points of three sides of the quadrilateral $Q$.
\end{lemma}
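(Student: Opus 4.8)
The plan is to use the continuous family of maximal disks $D_t = D(\gamma(t), r_t)$ along the simple path $\gamma$ in the medial axis joining the opposite quad-vertices $v_1$ and $v_3$, and to track how the intersection points of $\partial D_t$ with $\partial Q$ migrate among the sides as $t$ runs from $0$ to $1$. The two connected components of $\partial Q \setminus \{v_1, v_3\}$ are exactly $A_1 := a_1 \cup \{v_2\} \cup b_1$ and $A_2 := b_2 \cup \{v_4\} \cup a_2$ (with the obvious labelling from Figure \ref{figure:modulusQuad}). By the stated property of $\gamma$, for every $t \in (0,1)$ the circle $\partial D_t$ meets both $A_1$ and $A_2$; since $D_t \subset \overline{Q}$ is a maximal disk and $\gamma(t) \in MA(Q)$, it in fact touches each component in at least one point. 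The goal is to produce a single $t$ for which $\partial D_t$ touches points of three distinct sides among $\{a_1, b_1, a_2, b_2\}$.

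First I would set up the bookkeeping. For each component $A_i$, record which of its two constituent sides the disk $\partial D_t$ touches; because the disk is maximal (its center lies on the medial axis), the contact set on each side is a finite union of points, as the boundary is polygonal. The key observation is that as $t \to 0$, the disk shrinks toward the right-angled corner $v_1$, so $\partial D_t$ must touch the two sides $a_1$ and $b_2$ meeting at $v_1$; symmetrically, as $t \to 1$, it touches $a_2$ and $b_1$ at the corner $v_3$. Thus at the two ends of the path the ``touched sides'' are $\{a_1, b_2\}$ and $\{a_2, b_1\}$ respectively. If at either endpoint (or in a limit) the disk touches three sides we are done, so assume that throughout $(0,1)$ the disk touches exactly one side within each component $A_i$. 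I would then define two functions: $\sigma_1(t) \in \{a_1, b_1\}$ recording the side of $A_1$ touched, and $\sigma_2(t) \in \{b_2, a_2\}$ recording the side of $A_2$ touched.

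The heart of the argument is a discrete intermediate-value step. At $t$ near $0$ we have $\sigma_1 = a_1$ and $\sigma_2 = b_2$, while at $t$ near $1$ we have $\sigma_1 = b_1$ and $\sigma_2 = a_2$; so both labels must switch somewhere. I would argue that a switch of, say, $\sigma_1$ from $a_1$ to $b_1$ can only happen through a value $t^\ast$ at which $\partial D_{t^\ast}$ simultaneously touches \emph{both} $a_1$ and $b_1$ (or passes through the shared vertex $v_2$), since the contact point cannot jump discontinuously from one side to a disjoint side without the circle momentarily touching both: continuity of $t \mapsto (\gamma(t), r_t)$ in the product topology on $\overline{Q} \times [0, \mathrm{diam}(Q)]$ forces the contact point to vary continuously until it reaches the endpoint $v_2$ shared by the two sides. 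At such a $t^\ast$ the circle $\partial D_{t^\ast}$ touches $a_1$, $b_1$ (the two sides of $A_1$) and at least one side of $A_2$, giving three sides. The main obstacle I anticipate is making this ``transition forces simultaneous contact'' rigorous: one must rule out the circle losing contact with $A_1$ altogether (impossible, as it meets both components for all $t$), and must handle the degenerate scenario where the switch occurs exactly through the vertex $v_2$, in which case I would treat $v_2$ as belonging to both adjacent sides so that touching $v_2$ already counts as touching two sides of $A_1$.

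To close, I would phrase the transition argument cleanly as follows. Consider the closed sets $T_{a_1} = \{t : a_1 \cap \partial D_t \neq \emptyset\}$ and $T_{b_1} = \{t : b_1 \cap \partial D_t \neq \emptyset\}$; both are closed by continuity of the family and closedness of the sides, their union is all of $[0,1]$ (since $\partial D_t$ always meets $A_1$), and each is nonempty ($0 \in T_{a_1}$ after passing to closure, $1 \in T_{b_1}$). Since $[0,1]$ is connected and the two closed sets cover it, they cannot be disjoint, so there exists $t^\ast \in T_{a_1} \cap T_{b_1}$, at which $\partial D_{t^\ast}$ touches both $a_1$ and $b_1$. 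Combined with the fact that $\partial D_{t^\ast}$ also meets the other component $A_2$, the disk $D := D_{t^\ast}$ touches points of three sides of $Q$, and the two $a$-sides are opposite, establishing the ``opposite sides'' clause. This connectedness-of-$[0,1]$ mechanism is exactly the clean substitute for a discrete intermediate value theorem and I expect it to be the decisive and most delicate point to justify carefully.
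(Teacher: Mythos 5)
Your proposal is correct and follows essentially the same route as the paper: a simple path $\gamma$ in the medial axis joining $v_{1}$ to $v_{3}$, continuity of the family of maximal disks $t\mapsto(\gamma(t),r_{t})$, the fact that each circle $\partial D_{t}$ meets both components of $\partial Q\setminus\{v_{1},v_{3}\}$, and a connectedness argument in the parameter $t$. The only difference is in the execution of that last step: the paper takes the \emph{last} $s$ at which the disk touches both sides adjacent to $v_{1}$ (leaving the extraction of the third side implicit in a limiting argument), whereas you cover $[0,1]$ by the closed sets $T_{a_{1}}$ and $T_{b_{1}}$ and obtain both sides of one component plus a side of the other at a point of their intersection --- a somewhat cleaner way of carrying out the same intermediate-value mechanism.
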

\begin{proof}
Take a simple path $\gamma\colon[0,1]\to MA(Q)\subset\overline{Q}$ so that $\gamma(0)=v_{1}$ and $\gamma(0)=v_{3}$, opposite quad-vertices of $Q$. Close to $v_{1}$, the maximal disks around those points intersect the two line segments that are adjacent to $v_{1}$ (by the $\pi/2$ angle), and in particular, to the two adjacent sides of the quadrilateral to $v_{1}$. Close to $v_{3}$, by the $\pi/2$ angle, the maximal disks around those points intersect the two line segments that are adjacent to $v_{3}$. Since $\gamma$ is path joining those points, there is one last $s\in(0,1)$ for which the corresponding maximal disks intersect the two sides of the quadrilateral $Q$ that are adjacent to $v_{1}$. For such $s$, the maximal disk $D=D(\gamma(s),r_{s})$ is so that $\partial D\cap \partial Q$ contains points from three sides of $Q$.
\end{proof}

\begin{proof}[Proof of Theorem \ref{mainTheorem}]
Observe that Lemma \ref{lemma:main} is precisely the particular case of Theorem \ref{mainTheorem} for the quadrilaterals obtained in Lemma \ref{lemma:approxQuad}. Therefore, by Lemma \ref{lemma:limitDisk}, Theorem \ref{mainTheorem} is proved.
\end{proof}

\section{Proof of Corollary \ref{theorem:Hinkkanen}}\label{section:applications}

The characterization of the modulus in (\ref{definition:pathModulus}) leads to many interesting applications. It yields bounds on the modulus provided that we know some geometric properties of the quadrilateral by choosing adequate admissible metrics. For example, in \cite{LehtoVirtanen} Lehto and Virtanen prove the following:

\begin{proposition}[Lemma 4.1, p. 23 in \cite{LehtoVirtanen}]\label{Proposition:LehtoVirtanen}
The modulus of a quadrilateral $Q$ satisfies the following double inequality
$$ \frac{\left(\log(1+2s_{b}(Q)/s_{a}(Q))\right)^{2}}{\pi+2\pi\log(1+2s_{b}(Q)/s_{a}(Q))}\leq M(Q)\leq\frac{\pi+2\pi\log(1+2s_{a}(Q)/s_{b}(Q))}{\left(\log(1+2s_{a}(Q)/s_{b}(Q))\right)^{2}},$$
where $s_{a}(Q)$ and $s_{b}(Q)$ are defined as in (\ref{definition:s_a}).
\end{proposition}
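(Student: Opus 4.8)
The plan is to prove only the right-hand (upper) inequality
\[
M(Q)\leq\frac{\pi+2\pi\log\!\left(1+2s_{a}(Q)/s_{b}(Q)\right)}{\left(\log\!\left(1+2s_{a}(Q)/s_{b}(Q)\right)\right)^{2}},
\]
and to deduce the left-hand inequality from it by duality. Indeed, applying the upper bound to the conjugate quadrilateral $Q^{*}=Q(v_{2},v_{3},v_{4},v_{1})$ and using $M(Q^{*})=1/M(Q)$ together with $s_{a}(Q^{*})=s_{b}(Q)$ and $s_{b}(Q^{*})=s_{a}(Q)$ — the same reciprocity exploited in the proof of Lemma \ref{lemma:LehtoVirtanen} — turns the upper bound for $Q^{*}$ into exactly the claimed lower bound for $Q$. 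So everything reduces to a single inequality.

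To bound $M(Q)$ from above I would use the identity $M(Q)=M(\Gamma)$, where $\Gamma$ is the family of locally rectifiable arcs joining the opposite $a$-sides, as recorded after (\ref{definition:pathModulus}). Since $M(\Gamma)$ is an infimum over admissible metrics, it suffices to exhibit one admissible $\rho$ and to estimate $\int_{Q}\rho^{2}\,dm$. Writing $r_{0}=s_{b}(Q)/2$, $R_{0}=r_{0}+s_{a}(Q)$, and $L=\log(R_{0}/r_{0})=\log(1+2s_{a}(Q)/s_{b}(Q))$, I would take, for a suitably chosen center $z_{0}$, the logarithmic metric
\[
\rho(z)=\frac{1}{L}\cdot
\begin{cases}
1/r_{0}, & |z-z_{0}|\leq r_{0},\\
1/|z-z_{0}|, & r_{0}<|z-z_{0}|<R_{0},\\
0, & |z-z_{0}|\geq R_{0}.
\end{cases}
\]
A direct computation in polar coordinates gives
\[
\int_{Q}\rho^{2}\,dm\leq\frac{1}{L^{2}}\left(\frac{\pi r_{0}^{2}}{r_{0}^{2}}+\int_{0}^{2\pi}\!\!\int_{r_{0}}^{R_{0}}\frac{dr\,d\theta}{r}\right)=\frac{1}{L^{2}}\Bigl(\pi+2\pi\log\tfrac{R_{0}}{r_{0}}\Bigr)=\frac{\pi+2\pi L}{L^{2}},
\]
which is exactly the right-hand side of the asserted inequality. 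Thus the whole estimate hinges on showing that this $\rho$ is admissible.

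The heart of the argument, and the step I expect to be the main obstacle, is the admissibility of $\rho$: every $\gamma\in\Gamma$ must satisfy $\int_{\gamma}\rho\,|dz|\geq1$. The design of $\rho$ makes a \emph{radial} crossing of the annulus $\{r_{0}<|z-z_{0}|<R_{0}\}$ have $\rho$-length exactly $\tfrac{1}{L}\log(R_{0}/r_{0})=1$, so it would suffice to choose $z_{0}$ so that one $a$-side lies in the closed disk $\overline{D(z_{0},r_{0})}$ while the opposite $a$-side avoids the open disk $D(z_{0},R_{0})$; any arc joining them then crosses the annulus and collects $\rho$-length at least $1$. Converting the defining data (\ref{definition:s_a}) — a crossing arc has length at least $s_{a}(Q)$, and the $b$-sides are internal-distance $s_{b}(Q)$ apart — into such a placement of $z_{0}$ is the delicate point, since $s_{a}(Q)$ and $s_{b}(Q)$ are \emph{internal} rather than Euclidean distances and a single center need not control every arc of $\Gamma$ simultaneously. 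I would resolve this either by a localization/partition of $\Gamma$ adapted to the geometry of $Q$, or by first reducing, via conformal normalization and the stability of the modulus under convergence from inside (Lemma \ref{lemma:LehtoVirtanen}), to a position in which the radial estimate applies directly. Once admissibility is secured, the computation above yields the upper bound, and the duality in the first paragraph completes the proof.
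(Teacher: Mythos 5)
Your reduction by duality is correct ($M(Q(v_{2},v_{3},v_{4},v_{1}))=1/M(Q)$ while the $a$- and $b$-sides, hence $s_{a}$ and $s_{b}$, swap), your metric is the right one, and your area computation is exact; this is indeed the route of Lehto--Virtanen (note the paper does not prove Proposition \ref{Proposition:LehtoVirtanen}, it only cites it). But the gap is exactly where you located it, and the placement you propose cannot be arranged: in general there is \emph{no} center $z_{0}$ for which one $a$-side lies in $\overline{D(z_{0},r_{0})}$ with $r_{0}=s_{b}(Q)/2$ while the opposite $a$-side avoids $D(z_{0},R_{0})$ with $R_{0}=r_{0}+s_{a}(Q)$. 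Take the unit square with the horizontal sides as $a$-sides: then $r_{0}=1/2$, $R_{0}=3/2$, the only center containing the bottom side in $\overline{D(z_{0},1/2)}$ is its midpoint $z_{0}=(1/2,0)$, and the top side lies entirely inside $D(z_{0},3/2)$. The obstruction is structural: the Euclidean distance between the two $a$-sides is at most $s_{a}(Q)$, so once $a_{1}\subset\overline{D(z_{0},r_{0})}$ the opposite side always meets $\overline{D(z_{0},R_{0})}$, and avoiding the open disk is possible only in degenerate extremal configurations. Your fallbacks do not close this: conformal normalization changes $s_{a}$ and $s_{b}$ (they are Euclidean quantities, not conformal invariants), so Lemma \ref{lemma:LehtoVirtanen} cannot transport the estimate back, and the ``localization/partition of $\Gamma$'' is left unspecified.

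The missing idea is to anchor the metric on a nearly minimal $b$-crossing rather than on a side. Fix $\eps>0$, let $\gamma_{b}\subset Q$ join the $b$-sides with length at most $s_{b}(Q)+\eps$, and let $z_{0}$ be its arc-length midpoint, so that $\gamma_{b}\subset\overline{D(z_{0},r_{0})}$ with $r_{0}=(s_{b}(Q)+\eps)/2$; keep your $\rho$ with $R_{0}=r_{0}+s_{a}(Q)$ and $L=\log(R_{0}/r_{0})$. Admissibility then follows from two facts. First, every $\gamma\in\Gamma$ must intersect $\gamma_{b}$ (an arc joining the $a$-sides and an arc joining the $b$-sides of a quadrilateral always cross, by a Jordan-curve argument), so $\gamma$ passes through some point $p\in\overline{D(z_{0},r_{0})}$. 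Second, for any arc starting at $p$ and parametrized by arc length, $|z(s)-z_{0}|\le r_{0}+s$, hence $\rho(z(s))\ge\frac{1}{L(r_{0}+s)}$ for all $s<s_{a}(Q)$, such points being automatically in the support of $\rho$. Splitting $\gamma$ at $p$ into sub-arcs of lengths $\ell_{1},\ell_{2}$ with $\ell_{1}+\ell_{2}\ge s_{a}(Q)$ by (\ref{definition:s_a}), and setting $m_{i}=\min(\ell_{i},s_{a}(Q))$, so that $m_{1}+m_{2}\ge s_{a}(Q)$, we get
$$\int_{\gamma}\rho\,|dz|\;\ge\;\frac{1}{L}\log\frac{(r_{0}+m_{1})(r_{0}+m_{2})}{r_{0}^{2}}\;\ge\;\frac{1}{L}\log\frac{r_{0}+m_{1}+m_{2}}{r_{0}}\;\ge\;\frac{1}{L}\log\frac{R_{0}}{r_{0}}\;=\;1,$$
using $(r_{0}+m_{1})(r_{0}+m_{2})\ge r_{0}(r_{0}+m_{1}+m_{2})$. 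So $\rho$ is admissible, your computation gives $M(Q)\le(\pi+2\pi L)/L^{2}$ with $L=\log\bigl(1+2s_{a}(Q)/(s_{b}(Q)+\eps)\bigr)$, and letting $\eps\to0$ yields the upper bound; your duality argument then gives the lower one. Note that no curve is required to cross the annulus radially and no side needs to fit inside the inner disk: the estimate uses only that every competitor visits $\overline{D(z_{0},r_{0})}$ and has length at least $s_{a}(Q)$.
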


Observe that Proposition \ref{Proposition:LehtoVirtanen} shows that having a family of quadrilaterals $Q$ with uniformly bounded modulus $M(Q)\in[1/K,K]$ is equivalent to having a uniformly bounded ratio between the internal distances, that is, $s_{a}(Q)/s_{b}(Q)\in[1/L,L]$, where $L$ depends only on $K$. 

Let's see how Theorem \ref{mainTheorem} implies Corollary \ref{theorem:Hinkkanen}. Let $C$ the boundary of the disk $D$ given in Theorem \ref{mainTheorem}, as represented in Figure \ref{figure:quadANDcircle}. Then, there is a segment $\gamma$, contained in $D$, joining opposite sides. Suppose it joins the $a$-sides. We have $\textrm{diam}(C)\geq\textrm{length}(\gamma)\geq s_{a}(Q)$. Thus there exists a disk with the same center as $D$, and of radius $s_{a}(Q)/2$ contained in $Q$. Since we are considering quadrilaterals with uniformly bounded modulus, then by Proposition  \ref{Proposition:LehtoVirtanen}, $$ s_{a}(Q)\geq s_{b}(Q)/L.$$

\begin{figure}[h]
	\includegraphics[scale=1]{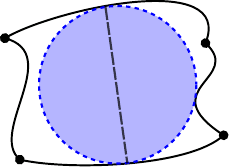}
	\setlength{\unitlength}{\textwidth}
	\put(-0.1,0.1){\scriptsize $Q$}
	\put(-0.146,0.08){\scriptsize $\gamma$}
	\put(-0.1234,-0.002){\scriptsize $\bullet$}
	\put(-0.1495,0.1824){\scriptsize $\bullet$}
	\put(-0.043,0.074){\scriptsize $\bullet$}
	\centering
	\caption{Representation of the circle obtained in Theorem \ref{mainTheorem} that intersects three sides of our quadrilateral $Q$.}
	\label{figure:quadANDcircle}
\end{figure}

That is, it also contains a disk of radius $s_{b}(Q)/2L$. Therefore, our quadrilateral $Q$ contains a disk of radius $\delta\max\{s_{a}(Q),s_{b}(Q)\}$, where $\delta=1/2L\in(0,1)$ and as we have mentioned before, this constant $L$ only depends on the bound $K$ for the modulus.

This completes the proof of Corollary \ref{theorem:Hinkkanen}.

\begin{remark}
The constant obtained in this new proof of Corollary \ref{theorem:Hinkkanen} is sharp with respect to $L$, i.e. with respect to the quadrilaterals $Q$ so that $s_{a}(Q)/s_{b}(Q)\in [1/L, L]$ (as it can be seen by considering a rectangle). But it is not with respect to $K$; given $K$, Proposition \ref{Proposition:LehtoVirtanen} yields some $L$, but given this $L$, Proposition \ref{Proposition:LehtoVirtanen} gives $\tilde{K}>K$.
\end{remark}

\printbibliography

\end{document}